\documentclass{article}
\usepackage{maa-monthly}


\theoremstyle{theorem}
\newtheorem{theorem}{Theorem}

\newtheorem{corollary}{Corollary}
\newtheorem{lemma}{Lemma}

\theoremstyle{definition}
\newtheorem*{definition}{Definition}

\begin{document}

\title{Infinitely Many Twin Prime Polynomials of Odd Degree}
\markright{Twin Prime Polynomials}
\author{Claire Burrin and Matthew Issac}

\maketitle

\begin{abstract}
While the twin prime conjecture is still famously open, it holds true in the setting of finite fields: There are infinitely many pairs of monic irreducible polynomials over $\mathbb{F}_q$ that differ by a fixed constant, for each $q\geq3$. Elementary, constructive proofs were given for different cases by  Hall and Pollack. In the same spirit, we discuss the construction of a further infinite family of twin prime tuples of odd degree, and its relations to the existence of certain Wieferich primes and to arithmetic properties of the combinatorial Bell numbers.
\end{abstract}

\section{Introduction.}

Let $\mathbb{F}_q$ be a finite field of $q\geq3$ elements, where $q$ is a prime power. The ring of integers $\mathbb{Z}$ and the polynomial ring $\mathbb{F}_q[X]$ exhibit a number of common features, including both being unique factorization domains. A prime (polynomial) in the latter setting is a monic irreducible polynomial. Our understanding of the distribution of prime polynomials is significantly more complete. To start with, one can precisely count the number $\pi_q(n)$ of monic irreducible polynomials of degree $n$ over $\mathbb{F}_q$, as was done by Gauss, who proved that 
$$
\pi_q(n) = \frac{1}{n}\sum_{d|n}\mu(d)q^{\tfrac{n}{d}},
$$ 
where $\mu(d)$ is the classical M\"obius function. As a result, as $q^n\to\infty$,
$$
\pi_q(n) = \frac{q^n}{n} + O\left(\frac{q^{n/2}}{n}\right),
$$
and this should be contrasted with the classical problem of counting prime numbers, for which the Riemann hypothesis is equivalent to the assertion that the number $\pi(x)$ of prime numbers less or equal than $x$ is
$$
\pi(x) = \frac{x}{\log x} + O\left(x^{1/2+o(1)}\right),
$$
as $x\to\infty$.

Another famous, long-standing open problem of number theory with a happier resolution over finite fields is the twin prime conjecture. For integers, the conjecture is that there are infinitely many pairs of primes of the form $(p,p+2)$ and this is still open, despite the spectacular breakthroughs of Zhang \cite{Zh} and Maynard \cite{Ma}. A refined quantitative form of this conjecture due to Hardy and Littlewood asserts that given distinct integers $a_1,\dots,a_r$, the number $\pi(x;a_1,\dots,a_r)$ of integers $n\leq x$ for which $n+a_1,\dots,n+a_r$ are simultaneously prime is 
$$
\pi(x;a_1,\dots,a_r)\sim \frak{S}(a_1,\dots,a_r)\frac{x}{(\log x)^r}
$$
as $x\to\infty$, for a nonnegative constant $\frak{S}(a_1,\dots,a_r)$ encoding local congruence obstructions. 

To formulate the corresponding problems over finite fields, let $q\geq3$. The \emph{size} of a nonzero polynomial $f$ of degree $n$ over $\mathbb{F}_q$ is defined to be $$|f|_q := q^n=|\mathbb{F}_q[X]/(f)|.$$ Two prime polynomials $f,g\in\mathbb{F}_q[X]$ form a \emph{twin prime pair} if the size of their difference $|f-g|_q$ is as small as possible, namely, if $|f-g|_q=1$. 
The twin prime conjecture asks for the existence of infinitely many twin prime pairs $(f,f+a)$ for some fixed $a\in\mathbb{F}_q^\times$. Given positive integers $n$ and $r$, and given distinct polynomials $a_1,\dots,a_r\in\mathbb{F}_q[X]$, each of degree less than $n$,
the Hardy--Littlewood conjecture asks for the growth of the number $\pi_q(n;a_1,\dots,a_r)$ of  tuples $(f+a_1,\dots,f+a_r)$, with $|f|_q=q^n$, as $q^n\to\infty$. There are two ways in which $q^n\to\infty$, namely taking either $q\to\infty$ or $n\to\infty$. These limits are usually considered separately, as they may lead to different asymptotic behaviors. Bary-Soroker \cite{BS2} proved that for any fixed positive integers $n$ and $r$, and distinct $a_1,\dots,a_r\in\mathbb{F}_q[X]$, each of degree less than $n$,
\begin{align}\label{HL}
\pi_q(n;a_1,\dots,a_r) = \frac{q^n}{n^r}+O_{n,r}(q^{n-1/2})
\end{align}
as $q\to\infty$, and very recently, Sawin and Shusterman \cite{SawinShusterman} settled the Hardy--Littlewood conjecture over finite fields by showing that for every large enough odd prime power $q$,
$$
|\{f\in\mathbb{F}_q[X]: |f|_q=x,\ (f,f+a) \text{ twin prime pair}\}|  \sim \frak{S}_q(a)\frac{x}{(\log_q x)^2}
$$
as $x\to\infty$ through powers of $q$, and where $\frak{S}_q(a)$ is the function field analogue of $\mathfrak{S}(a)$. 

Interestingly, it is also possible to showcase infinite families of twin prime pairs (or tuples) of polynomials. In this article, we consider \emph{elementary} constructions of this sort.

\section{An elementary proof.}
In his Ph.D. thesis \cite{Hall1}, Hall observed that over most finite fields, the twin prime conjecture in its qualitative form is an easy consequence of the following classical result of field theory; see, e.g., \cite[Theorem 9.1, p.~297]{Lang}.
\begin{theorem}\label{Lang}
Let $F$ be a field. Fix $n\in\mathbb{N}$ and $a\in F^\times$.  Then $X^n-a\in F[x]$ is irreducible over $F$ if and only if 
\begin{enumerate}
\item[(a)]
$a\not\in F^\ell=\{ a^\ell : a\in F\}$ for each prime divisor $\ell|n$, 
\item[(b)] and $a\not\in -4F^4$ whenever $4|n$.
\end{enumerate}
\end{theorem}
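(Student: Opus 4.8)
The statement is an ``if and only if,'' and the two directions carry very different weight. For the easy (necessity) direction I would argue the contrapositive: if $a=b^\ell$ for a prime $\ell\mid n$, then writing $n=\ell m$ displays $X^n-a=(X^m)^\ell-b^\ell$ as divisible by the proper nonconstant factor $X^m-b$; and if $4\mid n$ and $a=-4c^4$, then writing $n=4m$ and using $Y^4+4c^4=(Y^2-2cY+2c^2)(Y^2+2cY+2c^2)$ with $Y=X^m$ splits $X^n-a$ into two proper factors. In either case $X^n-a$ is reducible, so conditions (a) and (b) are necessary.

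For the substantial (sufficiency) direction, assume (a) and (b) and fix a root $\alpha$ of $X^n-a$ in an algebraic closure of $F$. Since $X^n-a$ is monic of degree $n$ and vanishes at $\alpha$, it is irreducible exactly when $[F(\alpha):F]=n$, and this equality is what I would prove. The first step is \emph{multiplicativity}: if $n=n_1n_2$ with $\gcd(n_1,n_2)=1$ and both $X^{n_1}-a$ and $X^{n_2}-a$ are irreducible over $F$, then so is $X^n-a$, because $\alpha^{n_2}$ is a root of $X^{n_1}-a$ and $\alpha^{n_1}$ a root of $X^{n_2}-a$, forcing $n_1\mid[F(\alpha):F]$ and $n_2\mid[F(\alpha):F]$, hence $n\mid[F(\alpha):F]\le n$. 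Applying this across the prime factorization of $n$ reduces everything to the prime-power case $n=\ell^r$, where (a)--(b) amount to ``$a\notin F^\ell$'' when $\ell$ is odd or $\ell=2,\ r=1$, and to ``$a\notin F^2$ and $a\notin-4F^4$'' when $\ell=2,\ r\ge2$.

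The prime base case $X^\ell-a$ is classical: in characteristic $\ell$ it reduces to the purely inseparable case, and otherwise a nontrivial monic factor of degree $d$ with $1\le d<\ell$ has constant term of the form $\pm\zeta^j\alpha^d$ for a primitive $\ell$-th root of unity $\zeta$, so $a^d\in F^\ell$, and $\gcd(d,\ell)=1$ with B\'ezout forces $a\in F^\ell$. For $\ell$ odd and $r\ge2$ I would induct on $r$: with $\beta=\alpha^\ell$, the inductive hypothesis makes $X^{\ell^{r-1}}-a$ the minimal polynomial of $\beta$, and since $\ell^{r-1}$ is odd its constant term yields $N_{F(\beta)/F}(\beta)=a$; thus $\beta=\gamma^\ell$ in $F(\beta)$ would give $a=N_{F(\beta)/F}(\gamma)^\ell\in F^\ell$, a contradiction, so $X^\ell-\beta$ is irreducible over $F(\beta)$ by the base case and $[F(\alpha):F(\beta)]=\ell$, giving $[F(\alpha):F]=\ell^r$.

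The last and hardest case is $\ell=2$. For $r=2$ I would inspect the possible nontrivial factorizations of $X^4-a$ directly: a linear factor produces a root of $a$ in $F$, whence $a\in F^2$; a splitting into two monic quadratics, after matching coefficients, forces $a\in F^2$ or $a\in-4F^4$. For $r\ge3$ one again inducts, with $\beta=\alpha^2$, so $X^{2^{r-1}}-a$ is the minimal polynomial of $\beta$; but now $N_{F(\beta)/F}(\beta)=-a$ only, which need not contradict $a\notin F^2$, so I would detour through the subfield $F(\beta^2)$, of degree $2^{r-2}$ over $F$, where $N_{F(\beta)/F(\beta^2)}(\beta)=-\beta^2$. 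Then $\beta=\gamma^2$ in $F(\beta)$ would produce $e:=N_{F(\beta)/F(\beta^2)}(\gamma)\in F(\beta^2)$ with $e^2=-\beta^2$, hence $e^{2^{r-1}}=\beta^{2^{r-1}}=a$; but $X^{2^{r-1}}-a$ is irreducible over $F$ by the inductive hypothesis, so $[F(e):F]=2^{r-1}>2^{r-2}$, a contradiction, and again $[F(\alpha):F(\beta)]=2$. I expect this step to be the main obstacle: the sign lost when taking the norm down to $F$ is exactly what the passage through $F(\beta^2)$ repairs, and one must set up the base cases $X^2-a$ and $X^4-a$ with enough care that the hypothesis $a\notin-4F^4$ is carried along the tower. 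Characteristic $2$ is a separate but easier matter throughout, since there $-4F^4=\{0\}$ and condition (b) holds automatically.
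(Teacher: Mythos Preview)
The paper does not prove this theorem at all; it is quoted as a classical result of field theory with a reference to Lang's \emph{Algebra} (Theorem~9.1, p.~297), and the article only \emph{uses} it, in the proof of Corollary~\ref{Hall}. So there is no in-paper argument to compare against.

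That said, your proposal is correct and is essentially the standard textbook proof (the one found in the cited reference): the necessity direction via explicit factorizations, the reduction to prime powers via the coprime tower argument, Abel's lemma for $X^\ell-a$, the norm argument for odd $\ell^r$, and the separate handling of $X^4-a$ together with the $F(\beta^2)$ detour for $2^r$ with $r\ge 3$. Two small points worth tightening if you write this up in full: (i) your induction at $\ell=2$ actually invokes the irreducibility of both $X^{2^{r-1}}-a$ and $X^{2^{r-2}}-a$, so it should be phrased as strong induction; (ii) in the base case $X^\ell-a$ in characteristic $\ell$, ``reduces to the purely inseparable case'' deserves one line, namely $X^\ell-a=(X-\alpha)^\ell$ and any proper factor $(X-\alpha)^d$ with $1\le d<\ell$ has $-d\alpha\in F$, forcing $\alpha\in F$ and hence $a\in F^\ell$.
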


\begin{corollary}\cite[Corollary 19]{Hall1}\label{Hall}
If $q-1$ admits an odd prime divisor, then there are infinitely many twin prime pairs $(f,f+1)$ over $\mathbb{F}_q$.  
\end{corollary}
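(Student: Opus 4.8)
The plan is to apply Theorem~\ref{Lang} to binomials $X^n-a$ whose degree $n$ is a power of a fixed odd prime dividing $q-1$. Let $\ell$ be an odd prime divisor of $q-1$ and take $n=\ell^k$ for $k\geq1$. Since $\ell$ is the only prime dividing $n$ and $4\nmid n$, condition (b) of Theorem~\ref{Lang} is vacuous and condition (a) collapses to the single requirement that $a\notin\mathbb{F}_q^\ell$; that is, $X^{\ell^k}-a$ is irreducible over $\mathbb{F}_q$ if and only if $a\notin\mathbb{F}_q^\ell$. The point of choosing $n=\ell^k$ is that this obstruction set $\mathbb{F}_q^\ell$ is the same for every $k$.

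Next I would find a single element $a\in\mathbb{F}_q$ with both $a\notin\mathbb{F}_q^\ell$ and $a-1\notin\mathbb{F}_q^\ell$. Granting this, set $f_k:=X^{\ell^k}-a$. Then $f_k$ is monic irreducible of degree $\ell^k$, while $f_k+1=X^{\ell^k}-(a-1)$ is again monic irreducible of the same degree (note $a\neq1$ automatically, since $1\in\mathbb{F}_q^\ell$). Hence $(f_k,f_k+1)$ is a twin prime pair for every $k\geq1$, and as $\deg f_k=\ell^k$ these pairs are pairwise distinct, so there are infinitely many.

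The only substantive step is producing such an $a$, and this is where the actual work lies. Because $\ell\mid q-1$, the $\ell$-th power map on $\mathbb{F}_q^\times$ has image of index $\ell$, so $|\mathbb{F}_q^\ell|=1+(q-1)/\ell$ when we count $0$. By a union bound, the number of $a\in\mathbb{F}_q$ for which $a\in\mathbb{F}_q^\ell$ or $a-1\in\mathbb{F}_q^\ell$ is at most $2\bigl(1+(q-1)/\ell\bigr)$, so the number of admissible $a$ is at least $q-2-2(q-1)/\ell\geq(q-4)/3$ since $\ell\geq3$; this is positive whenever $q\geq5$. The only prime power $q\geq3$ that satisfies the hypothesis but not $q\geq5$ is $q=4$ with $\ell=3$, which I would check by hand: in $\mathbb{F}_4=\mathbb{F}_2(\omega)$ with $\omega^2+\omega+1=0$, the only nonzero cube is $1$, so $\omega$ and $\omega+1=\omega^2$ are both non-cubes and $a=\omega$ works. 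I do not expect any real obstacle: the mathematical content is carried entirely by Theorem~\ref{Lang}, and what remains is the elementary counting estimate above, whose only delicate feature is verifying that it is not vacuous at the smallest admissible $q$. One could instead let $n$ range over all integers whose prime factors all divide $q-1$, but fixing $n=\ell^k$ keeps the obstruction set constant across degrees and makes the argument as short as possible.
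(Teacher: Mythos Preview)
Your proof is correct and follows essentially the same route as the paper: pick an odd prime $\ell\mid q-1$, find $a$ with $a$ and $a-1$ both non-$\ell$th-powers, and apply Theorem~\ref{Lang} to the binomials $X^{\ell^k}-a$ and $X^{\ell^k}-(a-1)$. The only difference is cosmetic: the paper works inside $\mathbb{F}_q^\times$ (so $|(\mathbb{F}_q^\times)^\ell|=(q-1)/\ell<(q-1)/2$) and invokes pigeonhole directly, which handles all $q$ uniformly; your union bound counts $0$ as an $\ell$th power, giving the slightly weaker estimate $(q-4)/3$ and forcing the separate (but correct) check at $q=4$.
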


\begin{proof} 
Let $\ell\mid q-1$ be an odd prime, and let $(\mathbb{F}_q^\times)^\ell$ be the subgroup of $\ell$th powers of elements in the unit group $\mathbb{F}_q^\times$. Since
$
|(\mathbb{F}_q^\times)^\ell|=\frac{q-1}{\ell} <\tfrac{q-1}{2},
$
there exist two consecutive elements $a, a-1\not\in (\mathbb{F}_q^\times)^\ell$ by Dirichlet's pigeonhole principle. Then by Theorem \ref{Lang}, $(X^{\ell^m}-a,X^{\ell^m}-a+1)$ is a twin prime pair, and this for each $m\geq0$.
\end{proof}

This remarkably simple proof settles the twin prime conjecture for all finite fields $\mathbb{F}_q$ save for the cases where
\begin{align}\label{generic}
q=2^n+1,
\end{align}
for some $n\in\mathbb{N}$. We observe that for prime fields of this form, $q$ is a Fermat prime. To this day, the only known Fermat primes are 3, 5, 17, 257, 65,537 and conjecturally, only finitely many exist. On the other hand, Catalan's conjecture (proved by Mih\u{a}ilescu \cite{Mi}) asserts that $2^3$ and $3^2$ are the only two existing consecutive positive powers, and hence $q=9$ is the only admissible prime power of the form (\ref{generic}).

\begin{definition}
A finite field $\mathbb{F}_q$ is called \emph{generic} if the order $|\mathbb{F}_q^\times|=q-1$ of the unit group $\mathbb{F}_q^\times$ has at least one odd prime divisor. Otherwise, it is called \emph{nongeneric}.
\end{definition}

For nongeneric finite fields (and in fact, more generally when $q\equiv 1$ (mod 4)), part (b) of Theorem \ref{Lang}, together with elementary counting considerations for quadratic residues, yields an explicit infinite family of twin prime polynomials of even degree; see \cite{Pol}. This construction has been extended to obtain twin prime tuples; see \cite{Eff}.

\section{A refined problem.}
Every student who took an introductory number theory class will be familiar with the following question: Given that there are infinitely many primes, and that each odd prime is congruent to either 1 or 3 (mod 4), are there infinitely many primes $p$ such that $p\equiv 1$ (mod 4), or, respectively, such that $p\equiv 3$ (mod 4)? Similarly, one may ask whether there exist infinitely many twin prime pairs $(f,f+1)$ of odd (respectively, even) degree?  

The question was settled affirmatively in the Ph.D. thesis of Pollack \cite{Pol}. For large enough $q$, the asymptotic (\ref{HL}) implies a positive answer, and Pollack's strategy was to bootstrap such an asymptotic to a substitution procedure, and treat the cases where $q$ is small by hand. The case of even degree can actually be approached directly, relying on a number of elementary constructions; see \cite[Lemmas 6.3.2--4]{Pol}. In the rest of this  note, we wish to consider a new elementary construction, in the spirit of Corollary \ref{Hall}, to cover the odd degree case.

Clearly, this is already achieved for generic fields $\mathbb{F}_q$ by the proof of Corollary \ref{Hall}. To cover also nongeneric fields, we examine below a different construction over prime fields $\mathbb{F}_p$. This does not leave out the case $\mathbb{F}_9$, as we explain next. In fact, any infinite family of twin primes of odd degree over $\mathbb{F}_3$ also defines an infinite family of twin primes of odd degree over $\mathbb{F}_9$. This follows from the following standard result for polynomials over finite fields: An irreducible polynomial of degree $n$ over $\mathbb{F}_q$ is irreducible over $\mathbb{F}_{q^k}$ if and only if $(k,n)=1$; see, e.g., \cite[Corollary 3.47]{LN}. Since in our situation, $k=2$ and $n$ is an odd degree, the odd twin prime conjecture for $\mathbb{F}_9[X]$ reduces to the case of $\mathbb{F}_3[X]$.

Let $p$ be an odd prime. The starting point of our construction is the polynomial 
$$
f(X)=X^p-X-1.
$$ 
By the Artin--Schreier theorem, $f(X)$ is irreducible over $\mathbb{F}_p$. Thus if $\alpha$ is a root, then $\mathbb{F}_p(\alpha)$ is a cyclic Galois extension of degree $p$ over $\mathbb{F}_p$; see \cite[Theorem 6.4, p. 290]{Lang}. Hence $\mathbb{F}_p(\alpha)\cong\mathbb{F}_{p^p}$. In particular, all roots $\alpha, \alpha^p,\dots,\alpha^{p^{p-1}}$ of $f$ are Galois conjugates and have the same multiplicative order in $\mathbb{F}_{p^p}^\times$. The order $e$ of the polynomial $f(X)$ is defined to be the multiplicative order of any of its roots in $\mathbb{F}_{p^p}^\times$. To examine this order $e$, we observe that 
$$
f(0) = -1 =\prod_{i=0}^{p-1} (0-\alpha^{p^i})=(-\alpha)^{1+p+\dots+p^{p-1}}=-\alpha^Q,
$$
where
$$
Q:= 1+p+p^2+\dots+p^{p-1} = \frac{p^p-1}{p-1}.
$$
It easily follows that $e \mid Q$. We have $(Q, 2p(p-1)) = 1$ directly from the definition of $Q$, and hence $e$ is odd and relatively prime to $p(p-1)$. 
Less obviously, the order $e$ coincides with the minimal period of Bell numbers modulo $p$. The Bell number $B(n)$ is the number of distinct partitions of a finite set of $n$ elements. A great number of problems can be interpreted in terms of Bell numbers; among other things, $B(n)$ counts
\begin{itemize}
\item the number of equivalence relations among $n$ elements,
\item the number of factorizations of the product of $n$ distinct primes into coprime factors,
\item the number of permutations of $n$ elements with ordered cycles;
\end{itemize}
see \cite{Rota} and references therein. Determining this minimal period has attracted quite a bit of attention. For very small primes ($p<180$), numerical computations show that $e=Q$, with some further probabilistic evidence given in \cite{Mo}.

To state our main result, we recall that a prime $\ell$ satisfying the congruence equation $b^{\ell-1}\equiv1$ (mod $\ell^2$), where $(b,\ell)=1$, is called a \emph{Wieferich prime in base $b$}.

\begin{theorem}\label{thm}
Let $p$ be an odd prime. For each $a\in\mathbb{F}_p^\times$, set $f_a(X)=X^p-X+a$ and let $e$ denote the order of $f_{-1}(X)$. For each odd prime divisor $\ell\mid e$,
\begin{enumerate}
\item 
if $\ell\nmid\tfrac{p^p-1}{e}$, then 
\begin{align}\label{family}
\{(f_1(X^{\ell^m}),f_2(X^{\ell^m}),\dots,f_{p-1}(X^{\ell^m})): m\geq0\}
\end{align}
is an infinite family of twin prime tuples of odd degree over $\mathbb{F}_p$;
\item if $\ell\mid \tfrac{p^p-1}{e}$, then $\ell$ is a Wieferich prime in base $p$.
\end{enumerate}
\end{theorem}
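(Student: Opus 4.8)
The plan is to reduce both parts to the irreducibility criterion of Theorem~\ref{Lang}, applied over the field $\mathbb{F}_{p^p}$, together with the arithmetic of $Q$ and $e$ already recorded above. The starting observation is that for every $a\in\mathbb{F}_p^\times$ the element $-a\alpha\in\mathbb{F}_{p^p}$ is a root of $f_a$: since $c^p=c$ for all $c\in\mathbb{F}_p$, one computes $(-a\alpha)^p-(-a\alpha)+a=-a(\alpha^p-\alpha)+a=0$, using $\alpha^p-\alpha=1$. Moreover $\mathbb{F}_p(-a\alpha)=\mathbb{F}_p(\alpha)=\mathbb{F}_{p^p}$.

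Next I would show that for a prime $\ell$ and an integer $m\ge0$, the polynomial $f_a(X^{\ell^m})$ is irreducible over $\mathbb{F}_p$ if and only if $X^{\ell^m}-(-a\alpha)$ is irreducible over $\mathbb{F}_{p^p}$. Indeed, a root $\gamma$ of the latter is a root of $f_a(X^{\ell^m})$, and $\mathbb{F}_p(\gamma)\supseteq\mathbb{F}_p(\gamma^{\ell^m})=\mathbb{F}_p(-a\alpha)=\mathbb{F}_{p^p}$, so by the tower law $[\mathbb{F}_p(\gamma):\mathbb{F}_p]=p\cdot[\mathbb{F}_{p^p}(\gamma):\mathbb{F}_{p^p}]$; this equals $p\ell^m=\deg f_a(X^{\ell^m})$ — which, since $f_a(X^{\ell^m})$ is monic with $\gamma$ as a root, forces it to be irreducible — exactly when $[\mathbb{F}_{p^p}(\gamma):\mathbb{F}_{p^p}]=\ell^m$, i.e. when $X^{\ell^m}-(-a\alpha)$ is irreducible over $\mathbb{F}_{p^p}$. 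When $\ell$ is an odd prime, condition (b) of Theorem~\ref{Lang} is vacuous and $\ell$ is the only prime dividing $\ell^m$, so this last irreducibility is equivalent to $-a\alpha\notin(\mathbb{F}_{p^p}^\times)^\ell$.

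The crucial point is that this obstruction does not depend on $a$. Since $\ell\mid e\mid Q$ while $\gcd(\ell,p-1)=1$ (recall $e$ is coprime to $p(p-1)$), we have $\frac{p^p-1}{\ell}=(p-1)\frac{Q}{\ell}$ with $\frac{Q}{\ell}\in\mathbb{Z}$, so $(p-1)\mid\frac{p^p-1}{\ell}$; hence the order-$(p-1)$ subgroup $\mathbb{F}_p^\times$ of the cyclic group $\mathbb{F}_{p^p}^\times$ lies in the index-$\ell$ subgroup $(\mathbb{F}_{p^p}^\times)^\ell$, so in particular $-a\in(\mathbb{F}_{p^p}^\times)^\ell$. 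Therefore $-a\alpha\in(\mathbb{F}_{p^p}^\times)^\ell$ if and only if $\alpha\in(\mathbb{F}_{p^p}^\times)^\ell$, if and only if $e=\mathrm{ord}(\alpha)$ divides $\frac{p^p-1}{\ell}$, if and only if $\ell\mid\frac{p^p-1}{e}$. So under the hypothesis of part (1), $\ell\nmid\frac{p^p-1}{e}$, every $f_a(X^{\ell^m})$ is a monic prime polynomial of odd degree $p\ell^m$, and for $a\neq b$ the difference $f_a(X^{\ell^m})-f_b(X^{\ell^m})=a-b$ is a nonzero constant, so $|f_a(X^{\ell^m})-f_b(X^{\ell^m})|_p=1$; letting $m=0,1,2,\dots$ yields the infinite family of odd-degree twin prime tuples in (\ref{family}).

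For part (2), assume $\ell\mid\frac{p^p-1}{e}$; together with $\ell\mid e$ and $e\mid p^p-1$ this gives $\ell^2\mid p^p-1$. On the other hand $\ell\mid Q\mid p^p-1$ but $\ell\nmid p-1$, so $p$ has multiplicative order modulo $\ell$ dividing the prime $p$ and different from $1$, hence order exactly $p$, and therefore $p\mid\ell-1$. Writing $\ell-1=pk$ gives $p^{\ell-1}=(p^p)^k\equiv1\pmod{\ell^2}$, and since $\ell\neq p$ this says precisely that $\ell$ is a Wieferich prime in base $p$. I expect the main obstacle to be the first reduction — identifying the irreducibility of $f_a(X^{\ell^m})$ over $\mathbb{F}_p$ with that of $X^{\ell^m}-(-a\alpha)$ over $\mathbb{F}_{p^p}$ with honest degree bookkeeping, and the accompanying observation that the $\ell$th-power obstruction is insensitive to $a$; what remains is elementary arithmetic of element orders and $\ell$-adic valuations built on the stated facts about $Q$ and $e$.
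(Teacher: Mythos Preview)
Your proof is correct and takes a genuinely different route from the paper's. The paper splits part (1) into two lemmas: first a substitution identity $b\cdot f_a(b^{-1}X^{\ell^m})=f_{-1}(X^{\ell^m})$ to reduce every $f_a$ to $f_{-1}$, and then a direct computation showing that a root $\beta$ of $f_{-1}(X^{\ell^m})$ has order $e\ell^m$, followed by an inductive argument (via the congruence $p^{p\ell^m}\equiv 1+(p-1)Q\ell^m\pmod{e\ell^{m+1}}$) that $[\mathbb{F}_p(\beta):\mathbb{F}_p]=p\ell^m$ precisely when $\ell\nmid\tfrac{p^p-1}{e}$. You instead observe that $-a\alpha$ is a root of $f_a$, reduce by the tower law to the irreducibility of $X^{\ell^m}-(-a\alpha)$ over $\mathbb{F}_{p^p}$, and then invoke Theorem~\ref{Lang} directly; your reduction from $f_a$ to $f_{-1}$ is the observation that $-a\in(\mathbb{F}_{p^p}^\times)^\ell$ because $(p-1)\mid\tfrac{p^p-1}{\ell}$. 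Your approach is shorter and more in the spirit of Corollary~\ref{Hall}, making transparent that the same irreducibility criterion is doing the work; the paper's approach is more self-contained and yields as a by-product the exact multiplicative order $e\ell^m$ of a root of $f_{-1}(X^{\ell^m})$. Part (2) is argued identically in both.
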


We note that the conjecture $e=Q$ would in particular imply that for each $\ell\mid e$, (\ref{family}) forms an infinite family of twin prime tuples over $\mathbb{F}_p$. The proof of Theorem \ref{thm} is elementary; we postpone it to Section \ref{sec3} and discuss here the problem of the existence of Wieferich primes. 

The fame of Wieferich primes in number theory owes to their appearance in work on Fermat's last theorem. In 1909, Wieferich \cite{Wi} proved that if the first case of Fermat's last theorem is false, i.e., if $X^p+Y^p=Z^p$ is solvable in positive integers $X$, $Y$, $Z$ for an odd prime $p$ such that $(p,XYZ)=1$, then $p$ must be a Wieferich prime in base 2. A year later, Miramanoff reached the same conclusion for base 3. An arms race was engaged to prove that up to large $x$, no prime below $x$ is simultaneously a Wieferich prime in base 2 and in base 3. In fact, numerically, Wieferich primes are rare: in base 2, the only ones presently known \cite{DK} below $6.7\times 10^{15}$ are 1093 and 3511, while in base 47, there is simply no known Wieferich prime. Heuristically, if we consider $\tfrac{b^\ell -1}{\ell}$ as a random integer, the probability that $\ell\mid \tfrac{b^\ell -1}{\ell}$ is roughly $1/\ell$. Since
 $$
 \sum_{\ell\leq x} \frac{1}{\ell} \ll \log\log x,
 $$
 this heuristic suggests that the number of Wieferich primes up to $x$ in base $b$ is of the order of the iterated logarithm $\log\log x$. The iterated logarithm tends to $\infty$ as $x\to\infty$, but it does so very, very slowly; e.g., if $x=10^{100}$, then $\log\log x\approx 5.4$. For comparison, the number of atoms in the universe is roughly of the order of $10^{80}$. As such, we expect that for every base $b$, there are infinitely many Wieferich primes as well as infinitely many non-Wieferich primes. (The latter, see \cite{Sil}, is not even known unless one assumes the $abc$-conjecture.) 

Fermat's last theorem is not the only place where Wieferich primes appear as obstructions. Fermat and Mersenne numbers, i.e., $F_n=2^{2^n}+1$ and $M_n=2^n-1$, $n\in\mathbb{N}$, are believed to be squarefree. In trying to prove this directly, one quickly sees that any prime factor $p$ such that $p^2$ divides either $F_n$ or $M_n$ must be a Wieferich prime in base 2. Theorem \ref{thm} showcases a similar phenomenon.

\section{Proof of Theorem \ref{thm}.}\label{sec3}
We quickly recall elements of notation. Let $p$ be an odd prime. Let $f_a(X)=X^p-X+a$, for $a\in\mathbb{F}_p^\times$. The order $e$ of $f(X):=f_{-1}(X)$ 
satisfies $e\mid Q=\tfrac{p^p-1}{p-1}$ and $(e,2p(p-1))=1$. Fix $\ell\mid e$ prime, and note that $\ell$ is necessarily odd.

Suppose first that $\ell\mid\tfrac{p^p-1}{e}$. In particular, $\ell^2\mid p^p-1$. Since $(\ell,p)=1$ and $p^p\equiv 1$ (mod $\ell$), Fermat's little theorem implies that $p\mid \ell-1$. Then 
$$
p^{\ell-1}= p^{p\cdot(\ell-1)/p} = (1+(p^p-1))^{(\ell-1)/p}\equiv 1\quad (\text{mod }\ell^2),
$$
which proves that $\ell$ is a Wieferich prime in base $p$. For readability, we break down the rest of the proof into the following two lemmata. 
 
 \begin{lemma}\label{A}
 Fix $m\geq0$. If  $f(X^{\ell^m})$ is irreducible over $\mathbb{F}_p$, then each polynomial in the tuple
$
(f_1(X^{\ell^m}),f_2(X^{\ell^m}),\dots,f_{p-1}(X^{\ell^m}))
$
is irreducible over $\mathbb{F}_p$.
\end{lemma}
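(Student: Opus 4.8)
The plan is to reduce the irreducibility of each $f_a(X^{\ell^m})$ directly to that of $f(X^{\ell^m})$ by an explicit, invertible change of variable, using that $\ell$ is coprime to $p-1$. First I would record the elementary identity
\[
f_a(X) = -a\,f\bigl(-a^{-1}X\bigr), \qquad a\in\mathbb{F}_p^\times,
\]
which one verifies by expanding the right-hand side and using $a^p=a$ in $\mathbb{F}_p$, so that $(-a^{-1})^p=-a^{-1}$. In words: the Artin--Schreier polynomials $f_a$ with $a\in\mathbb{F}_p^\times$ are obtained from $f=f_{-1}$ by rescaling the variable by a nonzero constant, up to an overall nonzero scalar.

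Next I would substitute $X\mapsto X^{\ell^m}$ into this identity, obtaining $f_a(X^{\ell^m}) = -a\,f\bigl(-a^{-1}X^{\ell^m}\bigr)$, and then absorb the constant $-a^{-1}$ inside the $\ell^m$-th power. This is where the arithmetic hypothesis enters: since $\ell\mid e$ and $(e,2p(p-1))=1$, we have $\gcd(\ell^m,p-1)=1$, so the $\ell^m$-power map is a bijection of $\mathbb{F}_p^\times$ and there is a unique $c\in\mathbb{F}_p^\times$ with $c^{\ell^m}=-a^{-1}$. Writing $g(X):=f(X^{\ell^m})$, we then have $-a^{-1}X^{\ell^m}=(cX)^{\ell^m}$ and hence
\[
f_a(X^{\ell^m}) = -a\,g(cX).
\]
Since $c\neq0$, the substitution $X\mapsto cX$ is an invertible $\mathbb{F}_p$-linear change of variable, hence an automorphism of $\mathbb{F}_p[X]$, so it carries irreducibles to irreducibles; multiplying by the nonzero constant $-a$ changes nothing. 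Thus the assumed irreducibility of $g=f(X^{\ell^m})$ transfers to every $f_a(X^{\ell^m})$, which is the claim. (As a byproduct the common degree $p\ell^m$ is odd, since $p$ is odd and $\ell\mid e$ is odd, which is what Theorem~\ref{thm} ultimately needs.)

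I do not expect a genuine obstacle; the only point requiring care is the second step, where one must check that the rescaling constant can be chosen \emph{inside} the $\ell^m$-th power, i.e.\ that $-a^{-1}$ is an $\ell^m$-th power in $\mathbb{F}_p^\times$ — which is exactly guaranteed by $\ell\nmid p-1$. An alternative, heavier route would be to invoke the classical criterion for irreducibility of $g(X^t)$ in terms of $\deg g$, the polynomial order of $g$, and gcd conditions (see \cite{LN}): one would note that $f_a$ has polynomial order equal to $e$ times the multiplicative order of $-a$ in $\mathbb{F}_p^\times$, and that the required gcd condition for $f_a$ follows from the one for $f$ together with $(e,p-1)=1$. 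The substitution argument above avoids computing orders altogether and seems cleaner.
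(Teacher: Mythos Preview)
Your argument is correct and is essentially the paper's own proof: both exploit the scaling identity relating $f_a$ and $f=f_{-1}$ (your $f_a(X)=-a\,f(-a^{-1}X)$ is the inverse form of the paper's $b\cdot f_a(b^{-1}X)=f(X)$ with $b=-a^{-1}$), and both then use $\gcd(\ell,p-1)=1$ to write the scaling constant as an $\ell^m$-th power so that the substitution $X\mapsto cX$ intertwines $f_a(X^{\ell^m})$ with $f(X^{\ell^m})$. The only cosmetic difference is that the paper phrases the conclusion contrapositively.
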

 \begin{proof}
 Fix $a\in\mathbb{F}_p^\times$. Choose $b\in\mathbb{F}_p^\times$ such that $ba=-1$ in $\mathbb{F}_p$. Then
$$
b\cdot f_a(b^{-1}X^{\ell^m}) = b\left(b^{-p}X^{p\ell^m}-b^{-1}X^{\ell^m}+a\right) = X^{p\ell^m}-X^{\ell^m}+ba =f(X^{\ell^m}).
$$
Since $(\ell,p-1)=1$, we have $b^{-1}=c^{\ell^m}$ for some $c\in\mathbb{F}_p^\times$. If $f_a(X^{\ell^m})$ is reducible, then there exist two nonconstant polynomials $g(X), h(X)\in\mathbb{F}_p[X]$ such that
$$
f(X^{\ell^m})=b\cdot f_a((cX)^{\ell^m})=b\cdot g(cX)h(cX). 
$$
Hence if $f(X^{\ell^m})$ is irreducible, then so is $f_a(X^{\ell^m})$.
\end{proof}

\begin{lemma}\label{B}
Fix $m\geq0$, and let $\beta:=\beta_{m,\ell}$ be a root of $f(X^{\ell^m})$. Then the multiplicative order $\mathrm{ord}(\beta)$ of $\beta$ in $\mathbb{F}_p(\beta)$ is $e\ell^m$. Moreover, $[\mathbb{F}_{p}(\beta):\mathbb{F}_p]=p\ell^m$ if and only if $\ell\nmid \tfrac{p^p-1}{e}.$
\end{lemma}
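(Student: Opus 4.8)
The plan is to translate both assertions into statements about multiplicative orders in finite fields. Set $\alpha:=\beta^{\ell^m}$; since $f(\beta^{\ell^m})=0$, the element $\alpha$ is a root of $f=f_{-1}$, so $\mathrm{ord}(\alpha)=e$ by the definition of the order of $f$ (all roots of $f$ being Galois conjugate, they share this order). Note also $\beta\neq0$, since $f(0)=-1\neq0$. The key reduction is the standard fact that, for any nonzero $\gamma$ in a finite extension of $\mathbb{F}_p$, the degree $[\mathbb{F}_p(\gamma):\mathbb{F}_p]$ equals the least $k$ with $\gamma\in\mathbb{F}_{p^k}$, equivalently the least $k$ with $\mathrm{ord}(\gamma)\mid p^k-1$, that is, the multiplicative order of $p$ modulo $\mathrm{ord}(\gamma)$ (which is automatically coprime to $p$).

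First I would determine $\mathrm{ord}(\beta)$. Writing $d:=\mathrm{ord}(\beta)$, the elementary identity $\mathrm{ord}(\beta^{\ell^m})=d/\gcd(d,\ell^m)$ together with $\mathrm{ord}(\alpha)=e$ gives $d=e\cdot\gcd(d,\ell^m)$, so in particular $e\mid d$. Writing $\gcd(d,\ell^m)=\ell^j$ with $0\le j\le m$, if $j<m$ then, because $\ell\mid e$,
$$
\gcd(d,\ell^m)=\gcd(e\ell^j,\ell^m)=\ell^j\gcd(e,\ell^{m-j})\ge\ell^{j+1}>\ell^j,
$$
a contradiction; hence $j=m$ and $\mathrm{ord}(\beta)=e\ell^m$. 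This step uses only that $\ell\mid e$.

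For the degree, the reduction above shows that $[\mathbb{F}_p(\beta):\mathbb{F}_p]$ is the least $k$ with $e\ell^m\mid p^k-1$, i.e. the order of $p$ in $(\mathbb{Z}/e\ell^m\mathbb{Z})^\times$. I would first record that the order of $p$ modulo $e$ is $p$: it divides $p$ since $e\mid Q\mid p^p-1$, and it is not $1$ since $e>1$ and $(e,p-1)=1$ force $e\nmid p-1$; the same reasoning shows the order of $p$ modulo $\ell$ is $p$, so every admissible $k$ is a multiple of $p$, say $k=pk'$. Set $r:=v_\ell\!\big(\tfrac{p^p-1}{e}\big)=v_\ell(p^p-1)-v_\ell(e)$, which is $\ge0$ because $v_\ell(e)\le v_\ell(Q)=v_\ell(p^p-1)$ (as $\ell\nmid p-1$). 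A lifting-the-exponent computation --- valid since $\ell$ is odd, $\ell\mid p^p-1$, and $\ell\nmid p$ --- yields
$$
v_\ell(p^k-1)=v_\ell\!\big((p^p)^{k'}-1\big)=v_\ell(p^p-1)+v_\ell(k').
$$
Since the prime-to-$\ell$ part of $e$ already divides $p^k-1$ (the order of $p$ modulo any divisor of $e$ divides $p$, which divides $k$), the condition $e\ell^m\mid p^k-1$ is equivalent to $v_\ell(p^k-1)\ge v_\ell(e)+m$, i.e. to $v_\ell(k')\ge m-r$; the least $k'\ge1$ satisfying this is $\ell^{\max(0,\,m-r)}$, so $[\mathbb{F}_p(\beta):\mathbb{F}_p]=p\,\ell^{\max(0,\,m-r)}$.

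Finally I would conclude: for $m\ge1$ this last quantity equals $p\ell^m$ exactly when $r=0$, i.e. exactly when $\ell\nmid\tfrac{p^p-1}{e}$ (and when $m=0$ both $[\mathbb{F}_p(\beta):\mathbb{F}_p]$ and $p\ell^m$ equal $p$). I expect the only genuinely delicate point to be the $\ell$-adic bookkeeping in the third paragraph: one must split $e$ into its $\ell$-part and its prime-to-$\ell$ part, observe that the latter together with the fact that $p$ has order $p$ modulo $\ell$ is exactly what forces $p\mid k$, and then let lifting the exponent control precisely how many extra factors of $\ell$ a given $k'$ supplies --- this is what isolates the exponent $\max(0,m-r)$ and ties the degree to the hypothesis on $\tfrac{p^p-1}{e}$.
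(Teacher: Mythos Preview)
Your argument is correct and follows the same overall strategy as the paper: first pin down $\mathrm{ord}(\beta)=e\ell^m$ via the identity $\mathrm{ord}(\beta^{\ell^m})=\mathrm{ord}(\beta)/\gcd(\mathrm{ord}(\beta),\ell^m)$, then identify $[\mathbb{F}_p(\beta):\mathbb{F}_p]$ with the multiplicative order of $p$ modulo $e\ell^m$, and finally compute that order. The only real difference is in this last step. The paper establishes by induction the explicit congruence
\[
p^{p\ell^m}\equiv 1+(p-1)Q\ell^m\pmod{e\ell^{m+1}},
\]
and reads off from it whether $p^{p\ell^{m-1}}\equiv1\pmod{e\ell^m}$. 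You instead invoke the lifting-the-exponent lemma to get $v_\ell(p^{pk'}-1)=v_\ell(p^p-1)+v_\ell(k')$, which is precisely the packaged form of that same inductive binomial computation. Your route is a little cleaner and has the pleasant side effect of yielding the exact value $[\mathbb{F}_p(\beta):\mathbb{F}_p]=p\,\ell^{\max(0,\,m-r)}$ with $r=v_\ell\!\big((p^p-1)/e\big)$, which is more than the lemma asks for; the paper's explicit congruence, on the other hand, makes the connection to the quantity $(p-1)Q=p^p-1$ visible without any appeal to an outside lemma. One small remark: as you note, the case $m=0$ gives degree $p$ unconditionally, so the biconditional in the lemma is really only contentful for $m\ge1$; the paper's proof handles this the same way.
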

\begin{proof}
Let $d:=d_{m,\ell}$ be the smallest positive integer such that $\mathbb{F}_{p^d}=\mathbb{F}_p(\beta)$. Equivalently, $d=[\mathbb{F}_p(\beta):\mathbb{F}_p]$. Since $\mathrm{ord}(\beta)\mid |\mathbb{F}_{p^d}^\times|=p^d-1$, we note that $d$ is also the order of $p$ in $(\mathbb{Z}/\mathrm{ord}(\beta)\mathbb{Z})^\times$. To determine $\mathrm{ord}(\beta)$, we first observe that 
$$
\mathrm{ord}(\beta^{\ell^m})=\frac{\mathrm{ord}(\beta)}{(\ell^m,\mathrm{ord}(\beta))},
$$
which is a standard result for cyclic groups. Since $f(\beta^{\ell^m})=0$ and the order of $f(X)$ is $e$, we have
$$
e = \frac{\mathrm{ord}(\beta)}{(\ell^m,\mathrm{ord}(\beta))}.
$$
It follows that $\mathrm{ord}(\beta)\mid e\ell^m$, and the above equation is equivalent to $(\tfrac{e\ell^m}{\mathrm{ord}(\beta)},e)=1$. We conclude that $\mathrm{ord}(\beta)=e\ell^m$. 
Therefore $d_{m,\ell}$ is the order of $p$ in $(\mathbb{Z}/e\ell^m\mathbb{Z})^\times$. 

We claim that 
\begin{align}\label{key cong}
p^{p\ell^m} \equiv 1 +(p-1)Q\ell^m\ (\text{mod }e\ell^{m+1})
\end{align}
for each $m\geq0$. If $m=0$, this follows from the definition of $Q$. The claim then follows by induction, using the binomial theorem and that $\ell\mid\binom{\ell}{j}$ for each $0<j<\ell$. With this congruence relation in hand, we can now show by induction over $m\geq0$ that $d_{m,\ell}=p\ell^m$ if and only if $\ell\nmid \tfrac{p^p-1}{e}$.

If $m=0$, we have $p^p\equiv 1$ (mod $e$) and $p\not\equiv 1$ (mod $e$). Hence $d_{0,\ell}=p$. For $m>0$, we have $p^{p\ell^m} \equiv 1$ (mod $e\ell^m$), and hence $d_{m,\ell}\mid p\ell^m$. On the other hand, by definition of $d_{m,\ell}$, we have $p^{d_{m,\ell}}\equiv 1$ (mod $e\ell^{m-1}$), and hence $d_{m-1,\ell}\mid d_{m,\ell}$. The induction hypothesis $d_{m-1,\ell}=p\ell^{m-1}$ implies that $d_{m,\ell}$ is equal to either $p\ell^m$ or $p\ell^{m-1}$. To rule out the latter option, we deduce from (\ref{key cong}) that
$$
p^{p\ell^{m-1}}\equiv 1+(p-1)Q\ell^{m-1}\equiv 1\ (\text{mod }e\ell^m)
$$
if and only if $\ell\mid \tfrac{p^p-1}{e}$.
\end{proof}

If $\ell\nmid\tfrac{p^p-1}{e}$, then by Lemma \ref{B}, the minimal polynomial of $\beta$ over $\mathbb{F}_p$ has degree $p\ell^m$. Since this is also the degree of $f(X^{\ell^m})$, by the uniqueness of the minimal polynomial, we conclude that $f(X^{\ell^m})$ is irreducible. Lemma \ref{A} then finishes the proof of Theorem \ref{thm}.

\begin{acknowledgment}{Acknowledgments.}
The authors thank the reviewers for their many helpful comments and suggestions.
\end{acknowledgment}

\begin{biog}
\item[Claire Burrin] received her Ph.D.~from ETH Z\"urich in 2016. She spent four years as Hill Assistant Professor at Rutgers University before coming back to ETH Z\"urich as Senior Research Associate.
\begin{affil}
ETH Z\"urich, Z\"urich, Switzerland\\
claire.burrin@math.ethz.ch
\end{affil}

\item[Matthew Issac] is an undergraduate student in Mathematics at Rutgers University. Parts of this article came out of an undergraduate research project in 2019. 
\begin{affil}
Rutgers University, Piscataway NJ 08544\\
matthew.issac@rutgers.edu
\end{affil}
\end{biog}
\vfill\eject

\end{document}